\newtheorem{theorem}{Theorem}[section]
\newtheorem{lemma}[theorem]{Lemma}
\newtheorem{corollary}[theorem]{Corollary}
\newtheorem*{theorem*}{Main Theorem}
\theoremstyle{definition}
\newtheorem{remark}[theorem]{\textsc{Remark}}
\renewcommand{\emptyset}{\varnothing}
\newcommand\starop[1]{\star}
\newcommand{\fixed}[2][1]{%
  \begingroup
  \spaceskip=#1\fontdimen2\font minus \fontdimen4\font
  \xspaceskip=0pt\relax
  #2%
  \endgroup
}
\begin{document}
\title{Subcommutativity of integrals and quasi-arithmetic means}

\author{Dorota G\l azowska}
\address[Dorota~G\l azowska]{Institute of Mathematics, University of Zielona G\'ora -- prof. Z. Szafrana 4a, PL-65-516 Zielona G\'ora}
\email{d.glazowska@im.uz.zgora.pl}

\author{Paolo Leonetti}
\address[Paolo~Leonetti]{Department of Economics, Universit\`a degli Studi dell'Insubria -- via Monte Generoso 71, IT-21100 Varese}
\email{leonetti.paolo@gmail.com}
\urladdr{\url{https://sites.google.com/site/leonettipaolo/}}

\author{Janusz Matkowski}
\address[Janusz~Matkowski]{Institute of Mathematics, University of Zielona G\'ora -- prof. Z. Szafrana 4a, PL-65-516 Zielona G\'ora}
\email{j.matkowski@im.uz.zgora.pl}
\urladdr{\url{http://januszmatkowski.com}}

\author{Salvatore Tringali}
\address[Salvatore~Tringali]{School of Mathematical Sciences,
Hebei Normal University | Shijiazhuang, Hebei province, 050024 China}
\email{salvo.tringali@gmail.com}
\urladdr{http://imsc.uni-graz.at/tringali}

\subjclass[2020]{Primary 26E60, 39B62; Secondary 39B52, 60B99.}
%
%

\keywords{Functional inequality; subcommuting mappings; [integral] quasi-arithmetic means; convexity; certainty equivalents.} 

\begin{abstract}
Let $(X, \mathscr{L}, \lambda)$ and $(Y, \mathscr{M}, \mu)$ be finite measure spaces for which there exist $A \in \mathscr{L}$ and $B \in \mathscr{M}$ with either $0 < \lambda(A) < 1 < \lambda(X)$ and $0 < \mu(B) < \mu(Y)$, or the other way around. In addition, let $I \subseteq \mathbb{R}$ be a non-empty open interval, and suppose that $f,g\colon I \to \mathbb{R}_{+}$ are
homeo\-morphisms with $g$ increasing. We prove that the functional inequality
$$
f^{-1}\!\left(\int_X f\!\left(g^{-1}\!\left(\int_Y g \circ h\;d\mu\right)\right)d \lambda\right)\! \le g^{-1}\!\left(\int_Y g\!\left(f^{-1}\!\left(\int_X f \circ h\;d\lambda\right)\right)d \mu\right)
$$
is satisfied by every $\mathscr{L} \otimes \mathscr{M}$-measurable simple function $h: X \times Y \to I$ if and only if $f=a \fixed[0.15]{\text{ }} g^b$ for some $a,b \in \mathbb{R}_{+}$ with $b\ge 1$. An analogous characterization is given for probability spaces.  
\end{abstract}
\maketitle
\thispagestyle{empty}
%

\section{Introduction and Main Results}
\label{sec:intro}
Let $(X,\mathscr{L},\lambda)$ and $(Y,\mathscr{M},\mu)$ be finite measure spaces, $I \subseteq \mathbb R$ be a non-empty interval, and $f, g\colon I\to \mathbb{R}$ be continuous in\-jec\-tions (note that $I$ may be bounded or unbounded, and need be neither open nor closed). We look for conditions on $f$ and $g$ under which the inequality
\begin{equation}\label{eq:maininequality}
f^{-1}\!\left(\int_X f\!\left(g^{-1}\!\left(\int_Y g \circ h\;d\mu\right)\right)d \lambda\right)\! \le g^{-1}\!\left(\int_Y g\!\left(f^{-1}\!\left(\int_X f \circ h\;d\lambda\right)\right)d \mu\right)\!
\end{equation}
is satisfied by every $h\colon X\times Y\rightarrow I$ in a suitable class of $\mathscr{L} \otimes \mathscr{M}$-measurable functions, taking for granted that 
each side of \eqref{eq:maininequality} is well defined. 

If $(X,\mathscr{L},\lambda)$ and $(Y,\mathscr{M},\mu)$ are probability spaces, the left- and right-hand side of \eqref{eq:maininequality} can be interpreted as ``partially mixed integral quasi-arithmetic means.'' 
Quasi-arithmetic means, commonly known as ``certainty equivalents'' in decision theory, were first considered by Kolmogorov \cite{Kol30}, Nagumo \cite{Nag30}, de Finetti \cite{deFin31}, and Kitagawa \cite{Kit34}, and have been proved to be useful in a large variety of contexts, see e.g. \cite{MR2887277, MR2171327, Leo22, MR2154023, MR4102319, MR4244243}. 
We refer the reader to \cite[Sect.~1]{LMT16} for related literature on the problem. 

It has been shown in \cite[Proposition 2]{LMT16} that each side of \eqref{eq:maininequality} is well posed if $(X,\mathscr{L},\lambda)$ and $(Y,\mathscr{M},\mu)$ are probability spaces and the image $h(X\times Y)$ is contained in a compact subset of $I$ for every ``test function'' $h$, which is especially the case when $h\colon X \times Y \to I$ is an $\mathscr{L} \otimes \mathscr{M}$-measurable simple function.

A related question has been recently addressed in \cite{GLMT17}, considering the equality case in non-degenerate probability spaces
\begin{equation}\label{eq:mainequation}
f^{-1}\!\left(\int_X f\!\left(g^{-1}\!\left(\int_Y g \circ h\;d\mu\right)\right)d \lambda\right)\! = g^{-1}\!\left(\int_Y g\!\left(f^{-1}\!\left(\int_X f \circ h\;d\lambda\right)\right)d \mu\right)\!.
\end{equation}
(All measure spaces are assumed to be finite, and a measure space $(S,\mathscr{C},\gamma)$ is said to be \emph{non-degenerate} if there exists $A \in \mathscr{C}$ with $0<\gamma(A)<\gamma(S)$.)

By the main result in \cite{LMT16}, the following characterization holds:
\begin{theorem}\label{th:mainoldold}
Let $(X,\mathscr{L},\lambda)$ and $(Y,\mathscr{M},\mu)$ be non-degenerate probability spaces, and $f,g: I\to \mathbb{R}$ be continuous injections. 

Then equation \eqref{eq:mainequation} is satisfied by every $\mathscr{L}\otimes \mathscr{M}$-measurable simple func\-tion $h:X\times Y \to I$ if and only if $f=ag+b$ for some $a,b \in \mathbb{R}$ with $a\neq 0$.
\end{theorem}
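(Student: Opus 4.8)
The strategy is to recast \eqref{eq:mainequation} as a \emph{bisymmetry} equation for the conjugated generator $\varphi:=f\circ g^{-1}$ and to solve that equation by elementary, continuity-based arguments.

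Since $f,g$ are continuous injections on the interval $I$, each is strictly monotonic; hence $\varphi=f\circ g^{-1}$ is a continuous injection on the interval $g(I)$, and $f=ag+b$ with $a\neq0$ holds precisely when $\varphi$ is affine. Put $G:=g\circ h$; as $h$ runs over the simple functions $X\times Y\to I$, $G$ runs over the simple functions $X\times Y\to g(I)$. Applying the bijection $g$ to both sides of \eqref{eq:mainequation} and using $f^{-1}=g^{-1}\circ\varphi^{-1}$ and $f\circ g^{-1}=\varphi$, one sees that \eqref{eq:mainequation} is equivalent to
\[
\varphi^{-1}\!\left(\int_X \varphi\!\left(\int_Y G\,d\mu\right)d\lambda\right)=\int_Y \varphi^{-1}\!\left(\int_X \varphi\circ G\,d\lambda\right)d\mu .
\]
For the sufficiency, if $\varphi$ is affine then, because $\lambda(X)=\mu(Y)=1$, the left- and right-hand sides above collapse to $\int_X\!\int_Y G\,d\mu\,d\lambda$ and $\int_Y\!\int_X G\,d\lambda\,d\mu$, which coincide by Fubini's theorem; thus the identity, and hence \eqref{eq:mainequation}, holds for every simple $h$.

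For the necessity I would exploit non-degeneracy to fix $A\in\mathscr{L}$ and $B\in\mathscr{M}$ with $p:=\lambda(A)\in(0,1)$ and $q:=\mu(B)\in(0,1)$, set $P:=1-p$ and $Q:=1-q$, and test the displayed identity against the four-valued simple functions $G=\alpha\,\mathbf{1}_{A\times B}+\beta\,\mathbf{1}_{A\times B^c}+\gamma\,\mathbf{1}_{A^c\times B}+\delta\,\mathbf{1}_{A^c\times B^c}$ with $\alpha,\beta,\gamma,\delta\in g(I)$. Writing $M(s,t):=\varphi^{-1}\!\left(p\,\varphi(s)+P\,\varphi(t)\right)$ for the weighted quasi-arithmetic mean generated by $\varphi$, a direct computation of both integrals turns the identity into the bisymmetry equation
\[
M\!\left(q\alpha+Q\beta,\;q\gamma+Q\delta\right)=q\,M(\alpha,\gamma)+Q\,M(\beta,\delta).
\]
Specializing $\gamma=\delta$ shows that $x\mapsto M(x,\gamma)$ is continuous and satisfies a single-ratio Jensen equation, hence is affine; specializing $\alpha=\beta$ does the same in the second slot. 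Therefore $M$ is affine in each variable separately, i.e.\ biaffine, $M(s,t)=Ast+Bs+Ct+D$; the mean property $M(s,s)=s$ (which uses $p+P=1$) forces $A=0$, $D=0$, $B+C=1$, with $B\in(0,1)$ because $M$ lies strictly between its arguments. Thus $M(s,t)=Bs+(1-B)t$, and applying $\varphi$ yields the weighted Jensen equation $\varphi(Bs+(1-B)t)=p\,\varphi(s)+(1-p)\,\varphi(t)$, whose continuous solutions are affine; hence $\varphi$ is affine and $f=ag+b$.

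The main obstacle is the final functional-equation analysis carried out with continuity alone, no differentiability being available a priori: one must (i) upgrade single-ratio Jensen-affinity to genuine affinity, and (ii) reduce the weighted Jensen equation to Cauchy's equation and invoke continuity to rule out nonlinear additive solutions. A secondary point worth making explicit is that non-degeneracy is exactly what renders the test functions informative: were $\lambda$ or $\mu$ degenerate, every $G$ of the above form would reduce to one depending on a single variable, for which the displayed identity holds automatically and yields no constraint on $\varphi$.
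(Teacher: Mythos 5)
Your proposal is correct, and one point of context matters for the comparison: this paper does not prove Theorem \ref{th:mainoldold} at all --- it is quoted from \cite{LMT16} --- so the only in-paper material to compare against is the template the authors use for their own results. Your strategy coincides with that template (and with the strategy of \cite{LMT16}): test the equation against four-valued simple functions supported on $A\times B$, $A\times B^{c}$, $A^{c}\times B$, $A^{c}\times B^{c}$, then conjugate so that everything is expressed through a single composite generator; compare your displayed identity with \eqref{equ:expression-of-h-as-sum-of-simple-fncts} and \eqref{equ:main1} in Lemma \ref{lem:keylem}, which is precisely the inequality version of your bisymmetry equation (the paper works with $g\circ f^{-1}$ rather than $f\circ g^{-1}$, which is immaterial). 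Where you are on your own is in solving the resulting equation, and your route is sound and self-contained: specializing $\gamma=\delta$ and $\alpha=\beta$ gives single-ratio Jensen equations in each slot, hence biaffinity; the mean property kills the bilinear and constant terms; and the final equation $\varphi(Bs+(1-B)t)=p\,\varphi(s)+(1-p)\,\varphi(t)$ does reduce to Cauchy. The two details you flag as obstacles are indeed routine under continuity: for (i), subtract the affine interpolant through two points and observe that the zero set of the difference is closed and stable under $(x,y)\mapsto qx+(1-q)y$, hence is the whole subinterval; for (ii), translate so that $0$ lies in the interior of the domain and normalize $\varphi(0)=0$, so that $t=0$ and $s=0$ give $\varphi(Bs)=p\,\varphi(s)$ and $\varphi((1-B)t)=(1-p)\,\varphi(t)$, whence restricted additivity, local linearity by continuity, and then injectivity forces $B=p$ and propagates linearity (via $\varphi(s)=p^{-n}\varphi(B^{n}s)$) to the whole interval. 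So the proposal is complete in outline, with only standard functional-equation details left to fill in, and it matches the approach of the source in everything except the (elementary, correct) endgame.
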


Moreover, if $(X,\mathscr{L},\lambda)$ and $(Y,\mathscr{M},\mu)$ are not probability spaces, then it is easy to check that both sides of \eqref{eq:mainequation} are well defined if $f$ and $g$ are continuous bijections from an interval $I$ to the positive reals $\mathbb{R}_{+}:=(0,\infty)$. 
In such a case, the interval $I$ is necessarily open so that, thanks to the Brouwer's invariance of domain theorem, see \cite{Bro1}, both $f$ and $g$ are homeomorphisms. 

Accordingly, the following result has been proved by the authors in \cite[Theorem 2]{GLMT17}: 
\begin{theorem}\label{th:mainold}
Let $(X,\mathscr{L},\lambda)$ and $(Y,\mathscr{M},\mu)$ be non-degenerate measure spaces, and  $f,g: I\to \mathbb{R}_{+}$ be continuous bijections. 

Then equation \eqref{eq:mainequation} is satisfied by every $\mathscr{L}\otimes \mathscr{M}$-measurable simple function $h:X\times Y \to I$ if and only if $f=c \fixed[0.15]{\text{ }}g$ for some $c\in \mathbb{R}_{+}$.
\end{theorem}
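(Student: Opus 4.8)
The plan is to establish both implications, disposing of the ``if'' part by a short computation and concentrating on the converse. For the ``if'' part, suppose $f=c\,g$ with $c\in\mathbb{R}_{+}$, so that $f^{-1}=g^{-1}(\,\cdot\,/c)$; substituting into \eqref{eq:mainequation} and cancelling the factors $c$, both sides reduce to $g^{-1}\!\big(\int_{X}\!\int_{Y} g\circ h\,d\mu\,d\lambda\big)$, the two iterated integrals coinciding by Fubini's theorem (applicable because $h$ is simple). Hence \eqref{eq:mainequation} holds for every simple $h$.

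For the converse I would first reduce \eqref{eq:mainequation} to a functional equation on $\mathbb{R}_{+}$. Using non-degeneracy, fix $A\in\mathscr{L}$ and $B\in\mathscr{M}$ with $0<\lambda(A)<\lambda(X)$ and $0<\mu(B)<\mu(Y)$, and write $p=\lambda(A)$, $q=\lambda(X\setminus A)$, $r=\mu(B)$, $s=\mu(Y\setminus B)$, all strictly positive. Set $\varphi:=f\circ g^{-1}$, a homeomorphism of $\mathbb{R}_{+}$. Testing \eqref{eq:mainequation} on the simple functions that are constant on each of the four rectangles $A\times B$, $A\times(Y\setminus B)$, $(X\setminus A)\times B$, $(X\setminus A)\times(Y\setminus B)$ and applying $g$ to both sides yields, since $g$ maps $I$ bijectively onto $\mathbb{R}_{+}$ and the four values thus range freely,
\[
\varphi^{-1}\!\bigl(p\,\varphi(ru+sv)+q\,\varphi(rw+sz)\bigr)
= r\,\varphi^{-1}\!\bigl(p\,\varphi(u)+q\,\varphi(w)\bigr)
+ s\,\varphi^{-1}\!\bigl(p\,\varphi(v)+q\,\varphi(z)\bigr)
\]
for all $u,v,w,z\in\mathbb{R}_{+}$. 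This is a generalized bisymmetry equation relating the weighted quasi-arithmetic combination $\varphi^{-1}(p\,\varphi(\cdot)+q\,\varphi(\cdot))$ to the weighted sum $r(\cdot)+s(\cdot)$.

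The decisive move is to conjugate this equation by $\varphi$. Putting $a=\varphi(u)$, $a'=\varphi(w)$, $b=\varphi(v)$, $b'=\varphi(z)$ and $K(x,y):=\varphi\bigl(r\varphi^{-1}(x)+s\varphi^{-1}(y)\bigr)$, the left-hand side becomes $\varphi^{-1}\bigl(p\,K(a,b)+q\,K(a',b')\bigr)$, while the right-hand side becomes $\varphi^{-1}\bigl(K(pa+qa',\,pb+qb')\bigr)$ by a second use of the definition of $K$; applying $\varphi$ gives
\[
K(pa+qa',\,pb+qb') = p\,K(a,b)+q\,K(a',b'),\qquad a,b,a',b'\in\mathbb{R}_{+}.
\]
Thus conjugation converts the nonlinear outer operation into a genuine two-place Jensen-type equation for $K$. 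Setting $a'=a$, $b'=b$ gives the homogeneity $K(Pt_{1},Pt_{2})=P\,K(t_{1},t_{2})$ with $P:=p+q$; dividing by $P$ reduces the displayed identity to the standard Jensen equation of weight $p'=p/P\in(0,1)$. By continuity of $K$ (inherited from $\varphi$), $K$ is affine in each variable separately, and comparing the mixed term excludes a bilinear summand, so $K(x,y)=\alpha x+\beta y+\gamma$ for suitable constants. Unwinding $K$ turns this into the Pexider equation
\[
\varphi(ru+sw)=\alpha\,\varphi(u)+\beta\,\varphi(w)+\gamma,
\]
whose continuous solutions are affine; hence $\varphi(t)=c\,t+d$. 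Since $\varphi$ is a homeomorphism of $\mathbb{R}_{+}$ onto itself, necessarily $d=0$ and $c>0$, that is $\varphi=c\cdot\mathrm{id}$, which is exactly $f=c\,g$ with $c\in\mathbb{R}_{+}$. The step I expect to be most delicate is the reduction itself: I must check that the four-rectangle test functions are admissible $\mathscr{L}\otimes\mathscr{M}$-measurable simple functions and that their four values genuinely sweep out all of $\mathbb{R}_{+}^{4}$, so that the bisymmetry identity holds with no restriction; after that, the conjugation identity is the real engine, and everything downstream is classical Jensen/Pexider theory.
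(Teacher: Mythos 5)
Your proposal is correct, but note that this paper never proves Theorem \ref{th:mainold} itself: the statement is imported from \cite{GLMT17}, and the machinery developed here is aimed at the inequality analogues (Theorems \ref{th:main} and \ref{th:main2}). Your route genuinely differs from that machinery, although both begin identically, by testing on the four-rectangle simple functions and obtaining the four-point identity (the equality version of \eqref{equ:main1}). The paper then works with $\Phi(\mathbf{x})=\varphi^{-1}(\beta_1\varphi(x_1)+\beta_2\varphi(x_2))$ (beware: its $\varphi$ is $g\circ f^{-1}$, the inverse of yours) and, via the Matkowski--Pycia characterization of Lemma \ref{lem:matk} --- which requires the extra hypothesis $0<\lambda(A)<1<\lambda(X)$ --- extracts positive homogeneity, reduces to Cauchy's additive equation for $\Lambda_\gamma(x)=\varphi(\gamma\varphi^{-1}(x))$, and uses multiplicativity to reach $f=a\,g^{b}$, pinning down $b$ by a concavity argument. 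You instead conjugate the four-point identity by $\varphi$, converting it into the joint Jensen equation $K(pa+qa',pb+qb')=p\,K(a,b)+q\,K(a',b')$ for $K(x,y)=\varphi(r\varphi^{-1}(x)+s\varphi^{-1}(y))$; the substitution $a'=a$, $b'=b$ gives homogeneity with respect to the single factor $P=p+q$, which suffices to normalize the weights, after which classical fixed-weight Jensen theory (with continuity) yields $K$ affine, unwinding gives a general linear (Pexider-type) equation for $\varphi$, and bijectivity of $\varphi$ on $\mathbb{R}_{+}$ forces $\varphi=c\,\mathrm{id}$, i.e.\ $f=c\,g$. Your approach buys two things: it works under mere non-degeneracy, exactly as the theorem requires (Lemma \ref{lem:matk} would not apply there), and the equality setting lets you apply $g$ to both sides with no case analysis on monotonicity, which is where most of the effort in the paper's inequality proofs goes. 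Two points you should make explicit to be fully rigorous: constant solutions of the Pexider step must be excluded and the coefficients $\alpha,\beta$ shown nonzero (both immediate from injectivity of $\varphi$), and the assertion that non-constant continuous solutions of $\varphi(ru+sw)=\alpha\varphi(u)+\beta\varphi(w)+\gamma$ on $\mathbb{R}_{+}$ are affine deserves a citation or a short increment argument; this is the ``general linear equation,'' for which the paper's own bibliography already contains a suitable reference \cite{MR4110366}.
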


Now we may ask what happens considering the (more general) functional inequality \eqref{eq:maininequality}, and we provide an answer to this question in several cases. 
Setting 
$$
\mathcal{M}_f(h):=f^{-1}\left(\int_X f \circ h \, \mathrm{d}\lambda \right)
\quad \text{ and }\quad
\mathcal{M}_g(h):=g^{-1}\left(\int_Y g \circ h \, \mathrm{d}\mu \right)
$$
for each $\mathscr{L}\otimes \mathscr{M}$-measurable simple function $h: X\times Y\to I$, equality \eqref{eq:mainequation} can be rewritten as $\mathcal{M}_f \circ \mathcal{M}_g = \mathcal{M}_g \circ \mathcal{M}_f$, i.e., the operators $\mathcal{M}_f$ and $\mathcal{M}_g$ commute. 
Analogously, inequality \eqref{eq:maininequality} holds if $\mathcal{M}_f \circ \mathcal{M}_g \le  \mathcal{M}_g \circ \mathcal{M}_f$, which can be interpreted as the \textquotedblleft subcommutativity of the pair 
$(\mathcal{M}_f, \mathcal{M}_g)$\textquotedblright\, 
or 
\textquotedblleft supercommutativity of the pair 
$(\mathcal{M}_g, \mathcal{M}_f)$.\textquotedblright\, 
A somehow related subcommutativity functional inequality has been studied in \cite{MR3474976}. 
%
%
\subsection{Main Results}\label{sec:mainresult}

Our first main result follows, which applies to a special kind of non-degenerate measure spaces.

\begin{theorem}\label{th:main}
Let $(X,\mathscr{L},\lambda)$ and $(Y,\mathscr{M},\mu)$ be non-degenerate measure spaces for which there exists $A \in \mathscr{L}$ such that $0 < \lambda(A) < 1 < \lambda(X)$, and let $f,g \colon I \to \mathbb{R}_{+}$ be 
continuous bijections. 

Then the inequality \eqref{eq:maininequality} is satisfied by every $\mathscr{L}\otimes \mathscr{M}$-measurable simple func\-tion $h \colon X\times Y \to I$ if and only if one the following holds\text{:}
\begin{enumerate}[label={\rm (\textsc{l}\arabic{*})}]
\item \label{item:1thm1} $g$ is increasing and $f=a \fixed[0.15]{\text{ }} g^b$ for some $a,b \in \mathbb{R}_{+}$ with $b\ge 1$\textup{;}
\item \label{item:2thm1} $g$ is decreasing and $f=a \fixed[0.15]{\text{ }} g^b$ for some $a \in \mathbb{R}_{+}$ and some non-zero $b\le 1$\textup{.}
\end{enumerate}
\end{theorem}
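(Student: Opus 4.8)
The plan is to reduce everything to the single homeomorphism $\phi := f \circ g^{-1}$ of $\mathbb{R}_{+}$ and to the observation that, after the substitution $u := g \circ h$ (which ranges over all nonnegative simple functions, since $g$ is a bijection onto $\mathbb{R}_{+}$), inequality \eqref{eq:maininequality} is equivalent, upon applying $g$ to both sides, to
\[
\phi^{-1}\!\Big(\int_X \phi\Big(\int_Y u\,\mathrm{d}\mu\Big)\mathrm{d}\lambda\Big) \le \int_Y \phi^{-1}\!\Big(\int_X \phi\circ u\,\mathrm{d}\lambda\Big)\mathrm{d}\mu
\]
when $g$ is increasing, and to the reverse inequality when $g$ is decreasing. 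Since $f = a\,g^{b}$ is equivalent to $\phi(t) = a\,t^{b}$, the cases \ref{item:1thm1}--\ref{item:2thm1} are mirror images; I would treat $g$ increasing in detail and recover the decreasing case by reversing every inequality and replacing forward Minkowski by its reverse. Thus the target becomes: the displayed inequality holds for all nonnegative simple $u$ if and only if $\phi(t)=a\,t^{b}$ with $b \ge 1$.

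For sufficiency, assume $\phi(t)=a\,t^{b}$. A direct computation collapses the displayed inequality to
\[
\Big(\int_X \Big(\int_Y u\,\mathrm{d}\mu\Big)^{b}\mathrm{d}\lambda\Big)^{1/b} \le \int_Y \Big(\int_X u^{b}\,\mathrm{d}\lambda\Big)^{1/b}\mathrm{d}\mu,
\]
which is exactly Minkowski's integral inequality and holds precisely when $b \ge 1$; in the decreasing case the reversed inequality is the reverse Minkowski inequality, valid for nonzero $b \le 1$. Note that neither $\lambda(X)$ nor $\lambda(A)$ enters here: the measure conditions play a role only in the converse.

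For necessity I would feed \eqref{eq:maininequality} the simple functions that are constant on the rectangles determined by $A$, $X\setminus A$ and a non-degeneracy witness $B \in \mathscr{M}$ with $0<\mu(B)<\mu(Y)$, reducing to the discrete form of the displayed inequality with the fixed weights $\alpha:=\lambda(A)\in(0,1)$, $\beta:=\lambda(X)-\alpha$, $c:=\alpha+\beta=\lambda(X)>1$, $\gamma:=\mu(B)$ and $\delta:=\mu(Y)-\gamma$. Two one-dimensional consequences fall out immediately: test functions constant in the $y$-variable give the subhomogeneity $N(d\,v)\le d\,N(v)$ of the functional $N(v):=\phi^{-1}(\int_X \phi\circ v\,\mathrm{d}\lambda)$, with $d:=\mu(Y)$, while test functions constant in the $x$-variable give a Jensen-type convexity of $\psi_c := \phi^{-1}\circ (c\,\cdot)\circ\phi$ against $\mu$. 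The crucial point is that $\lambda(X)>1$ forces $\psi_c$ to be a genuine, non-identity dilation, and $\lambda(A)\in(0,1)$ supplies a nontrivial weight; combining these one-dimensional relations with the full two-dimensional instance of the discrete inequality, and using that $\phi$ is a continuous monotone bijection, I would derive a multiplicative Cauchy-type equation for $\phi$ whose only continuous solutions are the power functions $\phi(t)=a\,t^{b}$. Finally, inserting one explicit two-by-two configuration shows that for $\phi(t)=a\,t^{b}$ the discrete inequality is weighted Minkowski, hence $b\ge 1$ (in particular $b>0$, ruling out a decreasing $\phi$); the decreasing-$g$ case yields nonzero $b\le1$ symmetrically.

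The main obstacle is exactly this extraction of the power law from a one-sided inequality. Unlike the equality problem of Theorem~\ref{th:mainold}, where commutativity yields two-sided Cauchy equations directly, here the hypothesis supplies only inequalities, and the underlying space may be as small as two atoms in each factor, so no refinement of the weights $\alpha,\beta,\gamma,\delta$ is available. Making the Cauchy-type equation emerge from the fixed-weight inequality---so that continuity alone upgrades it to $\phi(t)=a\,t^{b}$---is the delicate heart of the argument, and it is precisely here that the hypotheses $0<\lambda(A)<1<\lambda(X)$ are indispensable.
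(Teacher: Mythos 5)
Your sufficiency argument (Minkowski and reverse Minkowski after the substitution $u := g\circ h$) matches the paper's, and your setup for necessity---plugging in simple functions on the four rectangles determined by $A$, $A^c$, $B$, $B^c$ to get a discrete inequality in the fixed weights $\lambda(A)$, $\lambda(A^c)$, $\mu(B)$, $\mu(B^c)$---is also exactly how the paper begins (Lemma \ref{lem:keylem}). But the necessity direction is not actually proved: you yourself flag ``the extraction of the power law from a one-sided inequality'' as the main obstacle and leave it as a hope that ``a multiplicative Cauchy-type equation'' will emerge. That is a genuine gap, and it sits precisely at the heart of the theorem; nothing in your sketch (the subhomogeneity of $N$, the Jensen-type convexity of $\psi_c$) produces an \emph{equality} from which a Cauchy equation could follow, and a one-sided inequality with a single fixed pair of weights does not by itself rigidify $\phi$ at all.

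The missing idea, which is how the paper closes this gap, is a theorem of Matkowski and Pycia \cite{MR1316490} (Lemma \ref{lem:matk} in the paper): if $0<\gamma_1<1<\gamma_1+\gamma_2$ and $\Psi\colon\mathbb{R}_+^2\to\mathbb{R}$ satisfies $\liminf_{\mathbf{x}\to(0,0)}\Psi(\mathbf{x})\ge 0$, then the single convex-like inequality $\gamma_1\Psi(\mathbf{x})+\gamma_2\Psi(\mathbf{y})\le\Psi(\gamma_1\mathbf{x}+\gamma_2\mathbf{y})$ forces $\Psi$ to be \emph{exactly} positively homogeneous and superadditive. This is where the hypothesis $0<\lambda(A)<1<\lambda(X)$ enters: it makes the weights in \eqref{eq:fundamentalinequality} satisfy the Matkowski--Pycia condition, so the two-variable function $\Phi(\mathbf{x})=\varphi^{-1}(\beta_1\varphi(x_1)+\beta_2\varphi(x_2))$, $\varphi = g\circ f^{-1}$, is positively homogeneous---an identity, not an inequality. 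From that identity the paper defines $\Lambda_\gamma(x):=\varphi(\gamma\,\varphi^{-1}(x))$, shows it satisfies Cauchy's equation $\Lambda_\gamma(x+y)=\Lambda_\gamma(x)+\Lambda_\gamma(y)$ (after first letting variables tend to $0$ to absorb the weights $\beta_1,\beta_2$), deduces $\Lambda_\gamma(x)=m(\gamma)x$ with $m$ continuous and multiplicative, hence $m(\gamma)=\gamma^c$ and $f=a\,g^b$; finally, the concavity of the section $\Phi(\cdot,1)$, which is the other output of the same lemma, pins down $b\ge 1$ (respectively, nonzero $b\le 1$ in the decreasing cases) via an explicit second-derivative computation. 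Without this convex-like-inequality characterization, or an equivalent substitute that converts your fixed-weight inequality into an exact functional equation, your outline cannot be completed.
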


An analogue characterization holds replacing the hypothesis $0<\lambda(A)<1<\lambda(X)$ for some measurable set $A$ from the first to the second space (we state it for future references):
\begin{theorem}\label{th:main2}
Let $(X,\mathscr{L},\lambda)$ and $(Y,\mathscr{M},\mu)$ be non-degenerate measure spaces for which there exists $B \in \mathscr{M}$ such that $0 < \lambda(B) < 1 < \lambda(Y)$, 
and let $f,g: I\to \mathbb{R}_{+}$ be 
continuous bijections. 

Then the inequality \eqref{eq:maininequality} is satisfied by every $\mathscr{L}\otimes \mathscr{M}$-measurable simple func\-tion $h:X\times Y \to I$ if and only if 
\ref{item:1thm1} or \ref{item:2thm1} holds\textup{.} 
\end{theorem}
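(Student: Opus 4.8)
The plan is to treat the \emph{sufficiency} and \emph{necessity} of \ref{item:1thm1}--\ref{item:2thm1} separately, and to observe at the outset that only the necessity direction is genuinely new. Writing the two sides of \eqref{eq:maininequality} as $\mathcal{M}_f\circ\mathcal{M}_g$ and $\mathcal{M}_g\circ\mathcal{M}_f$, one checks that the implication ``\ref{item:1thm1} or \ref{item:2thm1} $\Rightarrow$ \eqref{eq:maininequality} for every simple $h$'' is completely symmetric in the two measure spaces: after the substitution $u:=g\circ h$ (which ranges over all positive simple functions, as $g\colon I\to\mathbb{R}_{+}$ is onto) and the normalization $f=a\,g^b$, inequality \eqref{eq:maininequality} becomes, when $g$ is increasing, a Minkowski-type integral inequality
\[
\Big(\int_X\Big(\int_Y u\,\dd\mu\Big)^{b}\dd\lambda\Big)^{1/b}\le\int_Y\Big(\int_X u^{b}\,\dd\lambda\Big)^{1/b}\dd\mu,
\]
which holds for arbitrary finite measure spaces precisely when $b\ge 1$ (and reverses, matching \ref{item:2thm1}, when $g$ is decreasing, since then $g^{-1}$ flips the inequality). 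As this argument never invokes the special set, the sufficiency half of Theorem~\ref{th:main2} is word-for-word that of Theorem~\ref{th:main}.

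For necessity I would first note that one \emph{cannot} simply quote Theorem~\ref{th:main} after interchanging $(X,\mathscr{L},\lambda)$ and $(Y,\mathscr{M},\mu)$: doing so does place the special set in the first space as required, but it simultaneously makes $g$ the outer function and thereby turns the template inequality into $\mathcal{M}_g\circ\mathcal{M}_f\le\mathcal{M}_f\circ\mathcal{M}_g$, i.e.\ the \emph{reverse} of \eqref{eq:maininequality}; and the characterization of the reverse inequality is not \ref{item:1thm1}--\ref{item:2thm1}. So a fresh argument adapted to a special set $B\in\mathscr{M}$ with $0<\mu(B)<1<\mu(Y)$ is needed. Setting $\Phi:=f\circ g^{-1}$ (a self-homeomorphism of $\mathbb{R}_{+}$) and again $u:=g\circ h$, inequality \eqref{eq:maininequality} is equivalent, for $g$ increasing, to the subcommutativity
\[
\Phi^{-1}\Big(\int_X\Phi\Big(\int_Y u\,\dd\mu\Big)\dd\lambda\Big)\le\int_Y\Phi^{-1}\Big(\int_X\Phi(u)\,\dd\lambda\Big)\dd\mu
\]
of the \emph{linear} operator $u\mapsto\int_Y u\,\dd\mu$ (over $Y$) with the \emph{nonlinear} $\Phi$-mean $v\mapsto\Phi^{-1}(\int_X\Phi(v)\,\dd\lambda)$ (over $X$); for $g$ decreasing the inequality reverses, which will produce \ref{item:2thm1}.

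The heart of the matter is to extract from this the power-law form $\Phi(t)=a\,t^{b}$ with $b\ge 1$, now that the special set sits on the \emph{linear} side. I would specialize to test functions that are two-valued in the $Y$-variable, $u(x,y)=u_1(x)$ on $X\times B$ and $u(x,y)=u_2(x)$ on $X\times(Y\setminus B)$; writing $\beta:=\mu(B)$ and $\gamma:=\mu(Y)-\mu(B)$, the displayed inequality collapses to
\[
\Phi^{-1}\Big(\int_X\Phi(\beta u_1+\gamma u_2)\,\dd\lambda\Big)\le\beta\,\Phi^{-1}\Big(\int_X\Phi(u_1)\,\dd\lambda\Big)+\gamma\,\Phi^{-1}\Big(\int_X\Phi(u_2)\,\dd\lambda\Big),
\]
valid for all positive simple $u_1,u_2$ on $X$, with the two structural constraints $0<\beta<1$ and $\beta+\gamma=\mu(Y)>1$ furnished exactly by $0<\mu(B)<1<\mu(Y)$. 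Using the non-degeneracy of $\lambda$ to let $u_1,u_2$ take two values on $X$, this forces $\Phi$ to satisfy a Jensen-type convexity inequality rigid enough to make the continuous $\Phi$ a power function $t\mapsto a\,t^{b}$; the weight $\beta<1$ together with the total mass $\beta+\gamma>1$ then pinch the exponent from both sides to give exactly $b\ge 1$. Unwinding $\Phi=f\circ g^{-1}$ yields $f=a\,g^{b}$, and the increasing/decreasing dichotomy for $g$ separates \ref{item:1thm1} from \ref{item:2thm1}.

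I expect the main obstacle to be precisely this last extraction step: proving that the weighted inequality above---with $\beta+\gamma\neq 1$---admits only power-function solutions with the correct one-sided exponent bound. Because the special set lies on the linear ($Y$) side rather than on the $\Phi$-mean ($X$) side, the clean two-point comparison available in Theorem~\ref{th:main} is no longer directly at hand, and one must instead combine several instances of the inequality (varying $u_1,u_2$ and iterating the straddle relation $\beta<1<\beta+\gamma$) to exclude all non-power homeomorphisms. Handling the borderline normalizations, together with the decreasing case for $g$ without sign errors, will require the most care.
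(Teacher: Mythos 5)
Your sufficiency argument and your two preliminary observations are sound: the Minkowski-inequality direction indeed never uses the special set (the paper treats it identically for both theorems), and you are right that one cannot deduce Theorem \ref{th:main2} from Theorem \ref{th:main} by swapping the spaces, since that flips \eqref{eq:maininequality} into its reverse. Your reduction of the necessity direction is also on track: plugging in $u_1,u_2$ that are two-valued on $X$ (using the non-degeneracy of $\lambda$), your displayed inequality becomes, with $G(\mathbf{x}):=\Phi^{-1}\left(\alpha_1\Phi(x_1)+\alpha_2\Phi(x_2)\right)$, precisely $G(\beta\mathbf{s}+\gamma\mathbf{t})\le\beta G(\mathbf{s})+\gamma G(\mathbf{t})$, i.e.\ inequality \eqref{eq:fundamentalinequality3} of Lemma \ref{lem:keylem}\ref{item:3lem} for $\tilde{\Phi}=-G$. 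The genuine gap is everything after that: the claim that this inequality ``forces $\Phi$ to be a power function $t\mapsto at^{b}$'' with ``the exponent pinched to $b\ge 1$'' \emph{is} the theorem, and you leave it as an expected obstacle rather than prove it. Moreover, the mechanism you hint at (a Jensen-type convexity rigidity, obtained by iterating the inequality) cannot work as stated: when $\beta+\gamma=1$ the very same inequality is satisfied by every increasing $f$ for which $\Psi_t$ is convex (Theorem \ref{thm:probabilitycharacterization}), a class far larger than the power functions. So the rigidity is not of convexity type at all; it must be extracted specifically from the straddle $0<\beta<1<\beta+\gamma$, and nothing in your outline does this.

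The paper closes exactly this hole with three ingredients absent from your proposal. First, Lemma \ref{lem:matk} (Matkowski--Pycia): if the \emph{outer} weights satisfy $0<\gamma_1<1<\gamma_1+\gamma_2$ (plus a liminf condition at the origin), then the convex-like inequality \eqref{equ:characterization-31dec} is equivalent to exact positive homogeneity together with superadditivity, and it forces concavity of the section $\Psi(\cdot\,,1)$; this applies here precisely because the special set lies in $\mathscr{M}$, so that in \eqref{eq:fundamentalinequality3} the straddling weights $\beta_1,\beta_2$ occupy the outer position. Second, exact homogeneity of $\tilde{\Phi}$ is an \emph{equation}, not an inequality: after the substitutions used for \eqref{eq:homogeneous}, with the roles of $\varphi$ and $\varphi^{-1}$ interchanged, it yields the Cauchy equation \eqref{eq:cacuchys} for $x\mapsto\varphi^{-1}(\gamma\varphi(x))$, whose continuous, multiplicative solution $m(\gamma)=\gamma^{c}$ gives $f=a\,g^{b}$. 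Third, the concavity of the section $\tilde{\Phi}(t,1)=-(\alpha_1 t^{b}+\alpha_2)^{1/b}$ then pins $b\ge 1$ when $g$ (hence $f$, by Lemma \ref{lem:keylem}\ref{item:1lem}) is increasing, while the analogous computation in the decreasing case produces \ref{item:2thm1}. Without these steps, or a substitute for them, your proposal establishes only the easy direction.
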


The proofs of the Theorem \ref{th:main} and Theorem \ref{th:main2} go essentially on the same lines. 
In addition, we are going to show in Lemma \ref{lem:keylem} below that, if inequality \eqref{eq:maininequality} holds and $g$ is increasing on arbitrary non-degenerate measure spaces, then $f$ is increasing as well. On the other hand, if $g$ is decreasing, then $f$ is not necessarily decreasing by Remark \ref{rmk:rmkcounterexample} (in a simple case which is not covered by our two results). 

In the case where $g$ is increasing, we can merge the above two characterization in the following immediate consequence (which is stated in the abstract):
\begin{corollary}
Let $(X, \mathscr{L}, \lambda)$ and $(Y, \mathscr{M}, \mu)$ be non-degenerate measure spaces for which there exist $A \in \mathscr{L}$ with $0 < \lambda(A) < 1 < \lambda(X)$ or  $B \in \mathscr{M}$ with $0 < \mu(B) < 1 < \mu(Y)$. Let also $f,g: I\to \mathbb{R}_{+}$ be continuous bijections with $g$ increasing. 

Then the inequality \eqref{eq:maininequality} is satisfied by every $\mathscr{L}\otimes \mathscr{M}$-measurable simple func\-tion $h:X\times Y \to I$ if and only if $f=a \fixed[0.15]{\text{ }} g^b$ for some $a,b \in \mathbb{R}_{+}$ with $b\ge 1$.
\end{corollary}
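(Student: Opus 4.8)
The plan is to read off the statement directly from Theorem~\ref{th:main} and Theorem~\ref{th:main2}, performing a case split on which of the two measure-theoretic hypotheses holds and using the assumption that $g$ is increasing to discard one of the two alternatives in each theorem.

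First I would exploit the dichotomy in the hypothesis: either there exists $A \in \mathscr{L}$ with $0 < \lambda(A) < 1 < \lambda(X)$, or there exists $B \in \mathscr{M}$ with $0 < \mu(B) < 1 < \mu(Y)$. In the former case the spaces $(X,\mathscr{L},\lambda)$ and $(Y,\mathscr{M},\mu)$ satisfy precisely the hypotheses of Theorem~\ref{th:main}; in the latter case, after interchanging the roles of the two spaces, they satisfy the hypotheses of Theorem~\ref{th:main2}. Non-degeneracy of both spaces, which is part of the corollary's assumptions, guarantees that the invoked theorem genuinely applies in each case.

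Next I would use the fact that $g$ is increasing to rule out alternative~\ref{item:2thm1} in whichever theorem is in force, since that alternative presupposes $g$ decreasing. Thus in either case the characterization collapses to alternative~\ref{item:1thm1}, namely that inequality \eqref{eq:maininequality} is satisfied by every $\mathscr{L}\otimes\mathscr{M}$-measurable simple function $h$ if and only if $f = a \fixed[0.15]{\text{ }} g^b$ for some $a,b \in \mathbb{R}_{+}$ with $b \ge 1$. Both the forward and the backward implications then transfer verbatim from the cited theorems, yielding the asserted equivalence.

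I do not expect any genuine obstacle, as the whole content is already packaged in the two theorems; the only points demanding (minor) care are the clean exclusion of the decreasing-$g$ branch and the symmetric bookkeeping of the two spaces in the second case of the dichotomy.
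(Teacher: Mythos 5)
Your proposal is correct and matches the paper's own treatment: the paper presents this corollary as an immediate consequence of Theorem~\ref{th:main} and Theorem~\ref{th:main2}, obtained exactly as you describe by splitting on which measure-theoretic hypothesis holds and using the monotonicity of $g$ to discard alternative~\ref{item:2thm1}. The only trivial remark is that in the second case no actual interchange of the spaces is needed, since Theorem~\ref{th:main2} is already stated for a set $B$ in the second space.
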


We conclude with the analogue characterization for probability spaces.  
\begin{theorem}\label{thm:probabilitycharacterization}
Let $(X, \mathscr{L}, \lambda)$ and $(Y,\mathscr{M},\mu)$ be a non-degenerate probability spaces, and $f,g: I\to \mathbb{R}_{+}$ be continuous bijections with $g$ increasing. Set $\phi:=f\circ g^{-1}$ and, for each $t \in (0,1)$, define the map $\Psi_t: \mathbb{R}^{2}_{+} \to \mathbb{R}_{+}$ by 
$$
\Psi_{t}(x_1,x_2):=\phi^{-1}(t \phi(x_1)+(1-t)\phi(x_2)),
\quad 
\text{for all }
\left(x_1,x_2\right)
\in \mathbb{R}_{+}^2.
$$ 
Then the following are pairwise equivalent\textup{:}
\begin{enumerate}[label={\rm (\textsc{p}\arabic{*})}]
\item \label{item:prob1} the inequality \eqref{eq:maininequality} is satisfied by every $\mathscr{L}\otimes \mathscr{M}$-measurable simple func\-tion $h:X\times Y \to I$\textup{;}
\item \label{item:prob2} $f$ is increasing and $\Psi_{t}$ is convex for some $t\in (0,1)$\textup{;}
\item \label{item:prob3} $f$ is increasing and $\Psi_{t}$ is convex for every $t\in (0,1)$\textup{.}
\end{enumerate}
\end{theorem}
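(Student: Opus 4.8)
The plan is to eliminate $g$ by a change of variables, turning \eqref{eq:maininequality} into a single Jensen-type inequality for $\phi=f\circ g^{-1}$, and then to read off the convexity of the maps $\Psi_t$ from it. First I would substitute $H:=g\circ h$; since $g$ is an increasing homeomorphism of $I$ onto $\mathbb{R}_{+}$, the correspondence $h\mapsto H$ is a bijection from the simple functions $X\times Y\to I$ onto the simple functions $X\times Y\to\mathbb{R}_{+}$. Using $f=\phi\circ g$, hence $f^{-1}=g^{-1}\circ\phi^{-1}$ and $g\circ f^{-1}=\phi^{-1}$, a direct computation rewrites the two sides of \eqref{eq:maininequality} as $g^{-1}\bigl(\phi^{-1}(\int_X\phi(\int_Y H\,d\mu)\,d\lambda)\bigr)$ and $g^{-1}\bigl(\int_Y\phi^{-1}(\int_X\phi(H)\,d\lambda)\,d\mu\bigr)$. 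As $g^{-1}$ is increasing, \ref{item:prob1} is then equivalent to
\begin{equation*}
\phi^{-1}\Bigl(\int_X\phi\bigl(\textstyle\int_Y H\,d\mu\bigr)\,d\lambda\Bigr)\le\int_Y\phi^{-1}\Bigl(\textstyle\int_X\phi(H)\,d\lambda\Bigr)\,d\mu \tag{$\star$}
\end{equation*}
holding for every simple $H\colon X\times Y\to\mathbb{R}_{+}$.

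Next I would produce the two halves of \ref{item:prob2}. That $f$ must be increasing is exactly Lemma~\ref{lem:keylem}, so I may assume $\phi$ increasing (whence $\phi^{-1}$, and each $\Psi_t$, are increasing in both arguments). To extract convexity, fix $A\in\mathscr{L}$ and $B\in\mathscr{M}$ with $s:=\lambda(A)\in(0,1)$ and $r:=\mu(B)\in(0,1)$ (these exist by non-degeneracy), and feed $(\star)$ the test functions $H(x,y)=u(y)\mathbf{1}_A(x)+v(y)\mathbf{1}_{A^c}(x)$, where $u,v$ are two-valued on $B,B^c$. A short computation collapses $(\star)$ to $\Psi_s\bigl(ru_1+(1-r)u_2,\,rv_1+(1-r)v_2\bigr)\le r\,\Psi_s(u_1,v_1)+(1-r)\,\Psi_s(u_2,v_2)$, that is, $\Psi_s$ is Jensen-convex with ratio $r$. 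Since $\Psi_s$ is continuous, this single-ratio Jensen convexity upgrades to genuine convexity of $\Psi_s$ on $\mathbb{R}_{+}^2$. Thus \ref{item:prob1} yields \ref{item:prob2} with $t=s$.

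The heart of the argument is the passage from one $t$ to all $t$, i.e.\ \ref{item:prob2}$\Rightarrow$\ref{item:prob3}. Here I would use the self-composition identity $\Psi_{t_1t_2+(1-t_1)t_3}(x_1,x_2)=\Psi_{t_1}\bigl(\Psi_{t_2}(x_1,x_2),\Psi_{t_3}(x_1,x_2)\bigr)$, valid because a $\phi$-mean of $\phi$-means is again a $\phi$-mean, together with the fact that an increasing convex outer function composed with convex inner functions is convex. Reading $\Psi_0,\Psi_1$ as the two coordinate projections (trivially convex), this shows that $S:=\{t\in(0,1):\Psi_t\text{ is convex}\}\cup\{0,1\}$ is stable under $(t_2,t_3)\mapsto t_0t_2+(1-t_0)t_3$ for the given $t_0$. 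As $S$ is also topologically closed (a pointwise limit of convex functions is convex, and $t\mapsto\Psi_t$ is pointwise continuous), a maximal gap $(c,d)$ in $S$ would force $t_0d+(1-t_0)c\in(c,d)\cap S$, a contradiction; hence $S=[0,1]$ and every $\Psi_t$ is convex. For the remaining implication \ref{item:prob3}$\Rightarrow$\ref{item:prob1} I would first promote bivariate to multivariate convexity: writing the weighted $\phi$-mean $G_p(z_1,\dots,z_n)=\phi^{-1}(\sum_i p_i\phi(z_i))$ as $\Psi_{p_1}\bigl(z_1,G_{p'}(z_2,\dots,z_n)\bigr)$ and inducting on $n$ shows $G_p$ is convex for every weight vector $p$; then, expanding a general simple $H$ over a product partition, $(\star)$ becomes precisely Jensen's inequality for the convex $G_p$ applied to the columns of the value matrix, so it holds. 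This closes the cycle \ref{item:prob1}$\Rightarrow$\ref{item:prob2}$\Rightarrow$\ref{item:prob3}$\Rightarrow$\ref{item:prob1}, giving the pairwise equivalence.

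The main obstacle I anticipate is the transfer of convexity across all $t$: the pointwise (Hessian) condition for convexity of $\Psi_t$ genuinely varies with $t$ away from the diagonal, so this step cannot be carried out by a local computation and must rest on the global composition-and-closure argument above. A secondary technical point, easily overlooked, is the upgrade from single-ratio Jensen convexity to full convexity, which relies on the continuity of $\Psi_s$ together with the fact that the admissible ratios generate a dense—hence, by closedness, full—subset of $(0,1)$.
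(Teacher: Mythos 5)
Your overall strategy coincides with the paper's skeleton, and two of the three implications are handled essentially as in the paper or better. Your \ref{item:prob1}$\Rightarrow$\ref{item:prob2} uses exactly the ingredients the paper routes through Lemma~\ref{lem:keylem}: product-form test functions on $A,A^c,B,B^c$ yield single-ratio Jensen convexity of one $\Psi_s$, and continuity upgrades this to convexity (the paper cites a characterization of convex functions for this step; your closing remark about dense admissible ratios is the same mechanism). Your \ref{item:prob2}$\Rightarrow$\ref{item:prob3} is a genuine improvement: the paper uses only the one-sided identity $\Psi_{\gamma_1}\left(\Psi_{\gamma_2}(\mathbf{x}),x_2\right)=\Psi_{\gamma_1\gamma_2}(\mathbf{x})$ (your identity with $t_3=0$) together with $\gamma\mapsto 1-\gamma$, and then needs a rather laborious maximal-gap density argument with powers of $\alpha_1$; your two-sided identity makes $S$ stable under $(t_2,t_3)\mapsto t_0t_2+(1-t_0)t_3$, and closedness of $S$ (pointwise limits of convex functions) plus the gap argument finishes in two lines. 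Those parts are correct.

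The gap is in \ref{item:prob3}$\Rightarrow$\ref{item:prob1}, at the words ``expanding a general simple $H$ over a product partition.'' An $\mathscr{L}\otimes\mathscr{M}$-measurable simple function need \emph{not} be constant on the cells of any finite product partition $\{A_i\times B_j\}$: its level sets are arbitrary members of the product $\sigma$-algebra --- for instance $\mathbf{1}_{\{(x,y)\,:\,y<x\}}$ on $[0,1]^2$ is not a finite union of measurable rectangles. So your Jensen step, which applies the convex $G_p$ to the ``columns of the value matrix,'' proves \eqref{eq:maininequality} only for the subclass of product-form simple functions; and since your \ref{item:prob1}$\Rightarrow$\ref{item:prob2} consumed only product-form test functions, your cycle closes for that subclass rather than for the class in the statement. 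The implication is true, but it needs a patch: approximate each level set $E_k$ of $H$ by a finite disjoint union of rectangles $R_k$ with $(\lambda\otimes\mu)(E_k\triangle R_k)$ small, so the resulting product-form approximants converge to $H$ in $L^1(\lambda\otimes\mu)$ while keeping values in a fixed compact subinterval of $\mathbb{R}_{+}$; since all integrands are uniformly bounded and $\phi,\phi^{-1}$ are uniformly continuous on compacta, both sides of \eqref{eq:maininequality} pass to the limit and the inequality survives. (Alternatively, one can invoke an integral Jensen inequality for the convex, monotone, continuous functional $F\mapsto\phi^{-1}\!\left(\int_X\phi(F)\,d\lambda\right)$, but that route has its own technicalities.) In fairness, the paper gives no argument at all here --- it declares \ref{item:prob3}$\Rightarrow$\ref{item:prob1} ``straightforward'' --- so you supplied strictly more detail; still, as written your step fails for non-product simple functions and must be repaired as above.
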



A strictly related result for classical quasi-arithmetic means can be found in \cite[Chapter IV, Section 5, Theorem 1]{MR2024343}. Note that \cite[Chapter IV, Section 5, Theorem 7]{MR2024343} provides a simple condition which ensures the convexity of the functions $\Psi_{t}$. 

The proof of our main results are given in the next section.

\section{Proofs of the Main Results}

Throughout this section, $f$ and $g$ are continuous bijections from a non-empty interval $I \subseteq \mathbb R$ to $\mathbb{R}_{+}$, and we use the following notation 
\begin{equation}\label{eq:definitionD}
D:=\mathbb{R}_{+}\times \mathbb{R}_{+}
\quad \text{ and }\quad
\varphi:=g\circ f^{-1}.
\end{equation}
Vectors in $D$ will be written with bold letters, e.g., $\mathbf{x}, \mathbf{y} \in D$ means $\mathbf{x}=(x_1,x_2)$ and $\mathbf{y}=(y_1,y_2)$ for some $x_1,x_2,y_1,y_2 \in \mathbb{R}_{+}$. 

Before we proceed to the proofs of the main results, we show a common preliminary lemma. 

\begin{lemma}\label{lem:keylem}
Let $(X,\mathscr{L},\lambda)$ and $(Y,\mathscr{M},\mu)$ be measure spaces for which there exist $A \in \mathscr{L}$ and $B \in \mathscr{M}$ with $0 < \lambda(A) < \lambda(X)$ and $0 < \mu(B) < \mu(Y)$, and let $f,g: I\to \mathbb{R}_{+}$ be continuous bijections. 
To ease the notation, define 
\begin{equation}\label{eq:alphai}
\alpha_1:=\lambda(A),\quad 
\alpha_2:=\lambda(A^c),\quad
\beta_1:=\mu(B)
\quad \text{ and }\quad
\beta_2:=\mu(B^c),
\end{equation}
where $A^c:= X\setminus A$ and $B^c:=Y\setminus B$. 

Suppose that 
inequality \eqref{eq:maininequality} is satisfied by every 
$\mathscr{L}\otimes \mathscr{M}$-measurable simple func\-tion $h:X\times Y \to I$. Then the following hold\textup{:}
\begin{enumerate}[label={\rm (\roman{*})}]
\item \label{item:1lem} If $g$ is increasing then $f$ is increasing\textup{;}
\item \label{item:2lem} If $g$ is increasing or if $f$ is increasing and $g$ is decreasing then 
\begin{equation}\label{eq:fundamentalinequality}
\alpha_1 \Phi(\mathbf{x})+\alpha_2 \Phi(\mathbf{y})\le \Phi(\alpha_1 \mathbf{x}+\alpha_2 \mathbf{y}),
\quad \text{ for all }\mathbf{x}, \mathbf{y} \in D,
\end{equation}
where
$\Phi: D\to \mathbb{R}_{+}$ is the function defined by 
$\Phi(\mathbf{x}):=\varphi^{-1}(\beta_1 \varphi(x_1)+\beta_2\varphi(x_2))$\textup{;} if both $f$ and $g$ are decreasing then inequality \eqref{eq:fundamentalinequality} holds with the opposite direction\textup{;} 
\item \label{item:3lem} If $g$ is increasing then 
\begin{equation}\label{eq:fundamentalinequality3}
\beta_1 \tilde{\Phi}(\mathbf{x})+\beta_2 \tilde{\Phi}(\mathbf{y})\le \tilde{\Phi}(\beta_1 \mathbf{x}+\beta_2 \mathbf{y}),
\quad \text{ for all }\mathbf{x}, \mathbf{y} \in D,
\end{equation}
where $\tilde{\Phi}: D\to \mathbb{R}_{+}$ is the function defined by $\tilde{\Phi}(\mathbf{x}):=-\varphi(\alpha_1 \varphi^{-1}(x_1)+\alpha_2\varphi^{-1}(x_2))$\textup{;} if $g$ is decreasing then inequality \eqref{eq:fundamentalinequality3} holds with the opposite direction\textup{.}
\end{enumerate}
\end{lemma}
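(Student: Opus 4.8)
The common engine for all three parts is to feed \eqref{eq:maininequality} the four-valued simple functions adapted to the product partition $\{A\times B,\ A\times B^c,\ A^c\times B,\ A^c\times B^c\}$. The plan is to fix the values $h_{11},h_{12},h_{21},h_{22}\in I$ of $h$ on these four rectangles, carry out the now finite integrals, and substitute $u_{ij}:=f(h_{ij})$, which range freely over $\mathbb{R}_+$ since $f$ is a bijection onto $\mathbb{R}_+$. Writing $\mathbf{x}:=(u_{11},u_{12})$ and $\mathbf{y}:=(u_{21},u_{22})$ and repeatedly using the identities $\varphi=g\circ f^{-1}$, $\varphi^{-1}=f\circ g^{-1}$, $g^{-1}\circ\varphi=f^{-1}$ and $f^{-1}\circ\varphi^{-1}=g^{-1}$, I expect the left-hand side of \eqref{eq:maininequality} to collapse to $f^{-1}(\alpha_1\Phi(\mathbf{x})+\alpha_2\Phi(\mathbf{y}))$ and the right-hand side to $f^{-1}(\Phi(\alpha_1\mathbf{x}+\alpha_2\mathbf{y}))$, so that \eqref{eq:maininequality} becomes the single clean inequality
\begin{equation}\label{eq:plan-star}
f^{-1}(\alpha_1\Phi(\mathbf{x})+\alpha_2\Phi(\mathbf{y}))\le f^{-1}(\Phi(\alpha_1\mathbf{x}+\alpha_2\mathbf{y})),\qquad \mathbf{x},\mathbf{y}\in D.
\end{equation}
This rewriting should hold verbatim irrespective of the monotonicity of $f$ and $g$, and it is the backbone of everything that follows. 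Since $f,g$ are continuous bijections $I\to\mathbb{R}_+$, each is strictly monotone, so in each part I may split into the cases ``increasing'' and ``decreasing.''

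For part \ref{item:1lem} I would argue by contradiction: assume $g$ is increasing but $f$ is decreasing, so that $\varphi=g\circ f^{-1}$ is a decreasing bijection of $\mathbb{R}_+$ with $\varphi(0^+)=+\infty$ and $\varphi(+\infty)=0^+$. As $f^{-1}$ is then decreasing, \eqref{eq:plan-star} is equivalent to the reversed inequality $\alpha_1\Phi(\mathbf{x})+\alpha_2\Phi(\mathbf{y})\ge\Phi(\alpha_1\mathbf{x}+\alpha_2\mathbf{y})$ for all $\mathbf{x},\mathbf{y}\in D$. I would then test this along $\mathbf{x}=(a,\varepsilon)$ and $\mathbf{y}=(\delta,b)$ with $a,b>0$ fixed and $\varepsilon,\delta\to0^+$: because $\varphi(\varepsilon),\varphi(\delta)\to+\infty$ force $\Phi(\mathbf{x}),\Phi(\mathbf{y})\to0$, while continuity of $\Phi$ gives $\Phi(\alpha_1\mathbf{x}+\alpha_2\mathbf{y})\to\varphi^{-1}(\beta_1\varphi(\alpha_1 a)+\beta_2\varphi(\alpha_2 b))>0$, the limiting inequality reads $0\ge(\text{a strictly positive number})$, a contradiction; hence $f$ must be increasing. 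I expect this limiting step --- reading off the boundary behaviour of $\Phi$ from that of $\varphi$ and checking that the inequality survives passage to the limit in \eqref{eq:plan-star} --- to be the main obstacle, and the one place where the specific geometry (codomain $\mathbb{R}_+$, the two endpoints of $I$ sent to $0$ and $\infty$ in opposite ways when $f$ and $g$ disagree) is genuinely used.

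Part \ref{item:2lem} should then be immediate from \eqref{eq:plan-star}: if $f$ is increasing I strip the outer (increasing) $f^{-1}$ to obtain exactly \eqref{eq:fundamentalinequality}, while if $f$ is decreasing, stripping the (now decreasing) $f^{-1}$ reverses it. Matching these two alternatives against the hypotheses --- invoking \ref{item:1lem} to rule out the pair ($f$ decreasing, $g$ increasing) --- yields precisely the stated cases ($g$ increasing, or $f$ increasing with $g$ decreasing, give \eqref{eq:fundamentalinequality}; both decreasing gives the reverse). For part \ref{item:3lem} I would rerun the backbone computation but substitute $v_{ij}:=g(h_{ij})$ (again free over $\mathbb{R}_+$, as $g$ is a bijection onto $\mathbb{R}_+$) and set $\mathbf{x}:=(v_{11},v_{21})$, $\mathbf{y}:=(v_{12},v_{22})$. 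The same identities should now turn \eqref{eq:maininequality} into
\begin{equation}\label{eq:plan-dual}
g^{-1}(-\tilde{\Phi}(\beta_1\mathbf{x}+\beta_2\mathbf{y}))\le g^{-1}(-(\beta_1\tilde{\Phi}(\mathbf{x})+\beta_2\tilde{\Phi}(\mathbf{y}))),
\end{equation}
where $-\tilde{\Phi}=\varphi(\alpha_1\varphi^{-1}(\cdot)+\alpha_2\varphi^{-1}(\cdot))>0$ keeps both arguments of $g^{-1}$ inside $\mathbb{R}_+$. Stripping the outer $g^{-1}$ then gives \eqref{eq:fundamentalinequality3} when $g$ is increasing and its reverse when $g$ is decreasing; note that, unlike part \ref{item:2lem}, no information on the monotonicity of $f$ is needed here. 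Throughout I would keep an eye on well-posedness: each inner integral of a positive simple function against a finite measure lands in $(0,\infty)$, the range of $g$ (respectively $f$), so every inverse in \eqref{eq:plan-star}--\eqref{eq:plan-dual} is legitimately applied.
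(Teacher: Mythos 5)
Your proposal is correct and follows essentially the same route as the paper: the same four-valued test functions on the partition $\{A\times B, A\times B^c, A^c\times B, A^c\times B^c\}$, the same change of variables through $f^{-1}$ (resp.\ $g^{-1}$) and the $\varphi$-identities, the same limiting contradiction at the boundary for \ref{item:1lem} (the paper fixes $s=v=1$ where you keep general $a,b$), and the same stripping of the outer inverse for \ref{item:2lem} and \ref{item:3lem}. The only cosmetic difference is that you strip $f^{-1}$ in a single step, so the direction of \eqref{eq:fundamentalinequality} is governed by the monotonicity of $f$ alone, whereas the paper applies $g$ and then $\varphi^{-1}$, tracking two reversals whose composition amounts to the same thing.
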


\begin{proof}
For all $x,y,z,w\in I$ the function
\begin{equation}
\label{equ:expression-of-h-as-sum-of-simple-fncts}
h=x\bm{1}_{A\times B}+y\bm{1}_{A\times B^c}+z\bm{1}_{A^c\times B} + w\bm{1}_{A^c\times B^c}
\end{equation}
is an $\mathscr{L}\otimes \mathscr{M}$-measurable simple function from $X\times Y$ to $I$, so we can put \eqref{equ:expression-of-h-as-sum-of-simple-fncts} into \eqref{eq:maininequality} and obtain
\begin{equation}
\label{equ:main1}
\begin{split}
f^{-1} (\alpha_1 f(g^{-1}(\beta_1&\fixed[0.2]{\text{ }} g(x)+\beta_2\fixed[0.2]{\text{ }} g(y))) + \alpha_2 f(g^{-1}(\beta_1 \fixed[0.2]{\text{ }}g(z)+\beta_2\fixed[0.2]{\text{ }}g(w)))) \\
& \le g^{-1}(\beta_1\fixed[0.2]{\text{ }} g(f^{-1}(\alpha_1 f(x)+\alpha_2f(z))) + \beta_2\fixed[0.2]{\text{ }} g(f^{-1}(\alpha_1 f(y)+\alpha_2f(w)))).
\end{split}
\end{equation}

First, suppose that $g$ is increasing. 
Note that $\varphi=g\circ f^{-1}$ is a continuous bijection on $\mathbb{R}_{+}$. Now, applying $g$ on both sides of \eqref{equ:main1} and setting 
$$
x= f^{-1}(s), \quad y=f^{-1}(t), \quad z= f^{-1}(u),
\quad  \text{and} \quad 
w = f^{-1}(v),
$$
we get
\begin{equation}\label{eq:increasingf}
\begin{split}
\varphi( \alpha_1 \fixed[0.2]{\text{ }} \varphi^{-1}(\beta_1 \fixed[0.2]{\text{ }}\varphi(s) + \beta_2\fixed[0.2]{\text{ }}\varphi(t)) &+ \alpha_2\fixed[0.2]{\text{ }} \varphi^{-1}(\beta_1 \fixed[0.2]{\text{ }}\varphi(u) + \beta_2\fixed[0.2]{\text{ }} \varphi(v)))  \\
             & \le 
             \beta_1\fixed[0.2]{\text{ }}\varphi(\alpha_1\fixed[0.2]{\text{ }} s+\alpha_2\fixed[0.2]{\text{ }}u) + \beta_2\fixed[0.2]{\text{ }}\varphi(\alpha_1\fixed[0.2]{\text{ }} t+\alpha_2\fixed[0.2]{\text{ }}v)
\end{split}
\end{equation}
for every $s,t,u,v \in f(I)=\mathbb{R}_{+}$. 

Let us suppose for the sake of contradiction that $f$ is decreasing. Then $\varphi$ would be a decreasing homeomorphism on $\mathbb{R}_{+}$, so that $\varphi(r)\to +\infty$ as $r\to 0^+$. Setting $s=v=1$ and applying $\varphi^{-1}$ on both sides of \eqref{eq:increasingf}, we obtain
\begin{displaymath}
\begin{split}
\alpha_1 \fixed[0.2]{\text{ }} \varphi^{-1}(\beta_1 \fixed[0.2]{\text{ }}\varphi(1) + \beta_2\fixed[0.2]{\text{ }}\varphi(t)) &+ \alpha_2\fixed[0.2]{\text{ }} \varphi^{-1}(\beta_1 \fixed[0.2]{\text{ }}\varphi(u) + \beta_2\fixed[0.2]{\text{ }} \varphi(1))  \\
             & \ge 
             \varphi^{-1} (
             \beta_1\fixed[0.2]{\text{ }}\varphi(\alpha_1\fixed[0.2]{\text{ }} +\alpha_2\fixed[0.2]{\text{ }}u) + \beta_2\fixed[0.2]{\text{ }}\varphi(\alpha_1\fixed[0.2]{\text{ }} t+\alpha_2\fixed[0.2]{\text{ }})
             )
\end{split}
\end{displaymath}
for every $t,u \in \mathbb{R}_{+}$. Letting $t\to 0^+$, $u\to 0^+$ and using the continuity of $\varphi$, we conclude that 
$$\varphi^{-1} (
             \beta_1\fixed[0.2]{\text{ }}\varphi(\alpha_1) + \beta_2\fixed[0.2]{\text{ }}\varphi(\alpha_2\fixed[0.2]{\text{ }})
             )\le 0,
$$
which is a contradiction. 
%
Therefore both $f$ and $\varphi$ are increasing, which proves \ref{item:1lem}. 

Applying $\varphi^{-1}$ on both sides of \eqref{eq:increasingf} we get
\begin{equation*}
\begin{split}
\alpha_1 \fixed[0.2]{\text{ }} \varphi^{-1}(\beta_1 \fixed[0.2]{\text{ }}\varphi(s) + \beta_2\fixed[0.2]{\text{ }}\varphi(t)) &+ \alpha_2\fixed[0.2]{\text{ }} \varphi^{-1}(\beta_1 \fixed[0.2]{\text{ }}\varphi(u) + \beta_2\fixed[0.2]{\text{ }} \varphi(v))  \\
             & \le 
             \varphi^{-1} \left(
             \beta_1\fixed[0.2]{\text{ }}\varphi(\alpha_1\fixed[0.2]{\text{ }} s+\alpha_2\fixed[0.2]{\text{ }}u) + \beta_2\fixed[0.2]{\text{ }}\varphi(\alpha_1\fixed[0.2]{\text{ }} t+\alpha_2\fixed[0.2]{\text{ }}v) \right)
\end{split}
\end{equation*}
for every $t,u \in \mathbb{R}_{+}$, and recalling the definition of 
$\Phi$, we obtain the inequality \eqref{eq:fundamentalinequality}, which proves also the first part of item \ref{item:2lem}. 
%
Similarly, taking the value of $g$ for both sides of \eqref{equ:main1}, and setting 
$$
x= g^{-1}(s), \quad y=g^{-1}(t), \quad z= g^{-1}(u),\quad  \text{and} 
 \quad w = g^{-1}(v),
$$ 
we obtain 
\begin{equation}\label{eq:increasingf2}
\begin{split}
\varphi( \alpha_1 \fixed[0.2]{\text{ }} \varphi^{-1}(\beta_1 \fixed[0.2]{\text{ }}s + \beta_2\fixed[0.2]{\text{ }}t) &+ \alpha_2\fixed[0.2]{\text{ }} \varphi^{-1}(\beta_1 \fixed[0.2]{\text{ }}u + \beta_2\fixed[0.2]{\text{ }} v))  \\
             & \le 
             \beta_1\fixed[0.2]{\text{ }}\varphi(\alpha_1\fixed[0.2]{\text{ }} \varphi^{-1}(s)+\alpha_2\fixed[0.2]{\text{ }}\varphi^{-1}(u)) + \beta_2\fixed[0.2]{\text{ }}\varphi(\alpha_1\fixed[0.2]{\text{ }} \varphi^{-1}(t)+\alpha_2\fixed[0.2]{\text{ }}\varphi^{-1}(v))
\end{split}
\end{equation}
for every $s,t,u,v \in g(I)=\mathbb{R}_{+}$. 
By the definition of the function $\tilde{\Phi}$, this is equivalent to \eqref{eq:fundamentalinequality3}, which proves the first part of item \ref{item:3lem}. 

Now, assume that $g$ is decreasing and note that, in such case, inequality \eqref{eq:increasingf} needs to be reversed. If $f$ is increasing then both $\varphi$ and $\varphi^{-1}$ are decreasing and, consequently, we obtain inequality \eqref{eq:fundamentalinequality}. Viceversa, if $f$ is also decreasing, both $\varphi$ and $\varphi^{-1}$ are increasing, so the inequality \eqref{eq:fundamentalinequality} holds with the opposite direction. This completes the proof of item \ref{item:2lem}.
Similarly, since $g$ is decreasing, inequality \eqref{eq:increasingf2} needs to be reversed and, hence, also inequality \eqref{eq:fundamentalinequality3}, which completes the proof of item \ref{item:3lem}. 
\end{proof}

\begin{remark}\label{rmk:rmkcounterexample}
One may be tempted to guess from the statement of Lemma \ref{lem:keylem}\ref{item:1lem} that if $g$ is decreasing then also $f$ is decreasing. However, this is false. For, suppose that $I=\mathbb{R}_{+}$, $f(t)=t$ and $g(t)=1/t$ for all $t \in\mathbb{R}_{+}$, $\mathscr{L}=\{\emptyset,A,A^c,X\}$ and $\mathscr{M}=\{\emptyset, B,B^c,Y\}$ with $\alpha_1=\alpha_2=\beta_1=\beta_2=1$. In such case, since $g$ is decreasing, then inequality \eqref{eq:maininequality} is satisfied by every $\mathscr{L}\otimes \mathscr{M}$-measurable simple function $h:X\times Y\to I$ if and only if \eqref{eq:fundamentalinequality3} holds with the opposite direction. And the latter one is satisfied, indeed $g=\varphi=\varphi^{-1}$, therefore 
\begin{displaymath}
\begin{split}
\tilde{\Phi}(\mathbf{x})+\tilde{\Phi}(\mathbf{y})-\tilde{\Phi}(\mathbf{x}+\mathbf{y})
&=-\left(\frac{1}{x_1}+\frac{1}{x_2}\right)^{-1}-\left(\frac{1}{y_1}+\frac{1}{y_2}\right)^{-1}+\left(\frac{1}{x_1+y_1}+\frac{1}{x_2+y_2}\right)^{-1}\\
&=\frac{(x_1y_2-x_2y_1)^2}{(x_1+x_2+y_1+y_2)(x_1+x_2)(y_1+y_2)}
\ge 0
\end{split}
\end{displaymath}
for all $\mathbf{x}, \mathbf{y} \in D$.
\end{remark}

In the proof of Theorem \ref{th:main}, we will also need the following characterization:
\begin{lemma}\label{lem:matk}
Fix $\gamma_1,\gamma_2 \in \mathbb{R}_{+}$ such that $0<\gamma_1<1<\gamma_1+\gamma_2$. Let $\Psi \colon D\to \mathbb{R}$ be a function such that $\liminf_{\bm{x}\to (0,0)}\Psi(\mathbf{x})\ge 0$. Then 
\begin{equation}\label{equ:characterization-31dec}
\gamma_1 \Psi(\mathbf{x})+\gamma_2 \Psi(\mathbf{y})\le \Psi(\gamma_1 \mathbf{x}+\gamma_2 \mathbf{y}),
\,\text{ for all }\mathbf{x}, \mathbf{y} \in D,
\end{equation}
if and only if both the following conditions hold\textup{:}
\begin{enumerate}[label={\rm (\textsc{m}\arabic{*})}, resume]
\item \label{item:M1} $\Psi$ is positively homogeneous, i.e., 
$$
\Psi(\gamma \mathbf{x})=\gamma \Psi(\mathbf{x}),\, \text{ for all }\mathbf{x} \in D \text{ and }\gamma \in \mathbb{R}_{+}\textup{;}
$$
\item \label{item:M2} $\Psi$ is superadditive, i.e., 
$$
\Psi(\mathbf{x}+\mathbf{y})\ge \Psi(\mathbf{x})+\Psi(\mathbf{y}),\, \text{ for all }\mathbf{x}, \mathbf{y} \in D\textup{.}
$$
\end{enumerate}
In particular, if $\Psi$ satisfies condition \eqref{equ:characterization-31dec}, then the section $\Psi(\cdot\,,1)$ is concave. 
\end{lemma}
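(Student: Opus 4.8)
The plan is to treat the two implications separately, the reverse one being immediate and the forward one resting almost entirely on the extraction of positive homogeneity. For the \emph{if} part, assume \ref{item:M1} and \ref{item:M2}: then for any $\mathbf x,\mathbf y\in D$ one has $\Psi(\gamma_1\mathbf x+\gamma_2\mathbf y)\ge\Psi(\gamma_1\mathbf x)+\Psi(\gamma_2\mathbf y)=\gamma_1\Psi(\mathbf x)+\gamma_2\Psi(\mathbf y)$, where the inequality is superadditivity \ref{item:M2} and the equality is homogeneity \ref{item:M1}; this is exactly \eqref{equ:characterization-31dec}. Note that this direction uses neither the boundary condition nor the constraints on $\gamma_1,\gamma_2$.

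For the \emph{only if} part, set $s:=\gamma_1+\gamma_2>1$. Putting $\mathbf y=\mathbf x$ in \eqref{equ:characterization-31dec} gives at once the diagonal scaling $\Psi(s\mathbf x)\ge s\Psi(\mathbf x)$, hence $\Psi(s^{n}\mathbf x)\ge s^{n}\Psi(\mathbf x)$ and $\Psi(s^{-n}\mathbf x)\le s^{-n}\Psi(\mathbf x)$ for every $n\in\NN$. To obtain full homogeneity I would restrict to a ray: fixing $\mathbf x_0\in D$ and writing $p(t):=\Psi(t\mathbf x_0)$ for $t>0$, the choice $\mathbf x=a\mathbf x_0$, $\mathbf y=b\mathbf x_0$ turns \eqref{equ:characterization-31dec} into the one–variable inequality $\gamma_1 p(a)+\gamma_2 p(b)\le p(\gamma_1 a+\gamma_2 b)$, while $\liminf_{t\to0^+}p(t)\ge0$. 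Since an affine function $t\mapsto ct$ renders this inequality an equality, subtracting the linear term $t\mapsto t\,p(1)$ leaves the inequality intact and reduces the problem to showing that a solution $q$ with $q(1)=0$ and $\liminf_{t\to0^+}q(t)\ge0$ must vanish identically; this yields $\Psi(t\mathbf x_0)=t\,\Psi(\mathbf x_0)$, that is \ref{item:M1}.

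Granting \ref{item:M1}, condition \ref{item:M2} follows by the substitution $\mathbf u=\gamma_1\mathbf x$, $\mathbf v=\gamma_2\mathbf y$: inequality \eqref{equ:characterization-31dec} becomes $\Psi(\mathbf u+\mathbf v)\ge\gamma_1\Psi(\mathbf u/\gamma_1)+\gamma_2\Psi(\mathbf v/\gamma_2)=\Psi(\mathbf u)+\Psi(\mathbf v)$, and as $\mathbf u,\mathbf v$ range over all of $D$ this is superadditivity. The closing assertion is then a one–line consequence of \ref{item:M1} and \ref{item:M2}: for $x,y>0$ and $\theta\in(0,1)$ one has $\theta\Psi(x,1)+(1-\theta)\Psi(y,1)=\Psi(\theta x,\theta)+\Psi((1-\theta)y,1-\theta)\le\Psi(\theta x+(1-\theta)y,1)$, so the section $\Psi(\cdot\,,1)$ is concave.

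The crux is the vanishing of $q$, that is, promoting the one–sided scaling inequalities into the exact identity \ref{item:M1}. Here the hypotheses earn their keep: because $0<\gamma_1<1$, the relation $q(\gamma_1 t+\gamma_2)\ge\gamma_1 q(t)$ obtained from $a=t$, $b=1$ involves the affine contraction $t\mapsto\gamma_1 t+\gamma_2$, whose fixed point $\gamma_2/(1-\gamma_1)$ lies in $(1,\infty)$ precisely because $s>1$; iterating it together with the diagonal scaling $q(s^{\pm n}t)$ is meant to transport the behaviour of $q$ to a neighbourhood of the origin, where $\liminf_{t\to0^+}q(t)\ge0$ forces $q\equiv0$. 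The delicate point—and the step I expect to absorb most of the work—is carrying out these limiting arguments without assuming continuity of $\Psi$; the concavity–type nature of \eqref{equ:characterization-31dec} is what should supply the local control on $\Psi$ needed to justify the passage to the limit.
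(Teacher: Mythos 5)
Your \emph{if} direction, your derivation of \ref{item:M2} from \ref{item:M1} plus \eqref{equ:characterization-31dec}, and your one-line proof of the concavity of $\Psi(\cdot\,,1)$ are all correct; the last of these is essentially the same computation the paper uses. But the heart of the lemma is the \emph{only if} direction, specifically positive homogeneity, and there your proposal stops short of a proof. The paper does not prove this part either by hand: it invokes Corollary~2 of Matkowski--Pycia \cite{MR1316490} (see also \cite{m444}), which is a genuine theorem whose proof occupies several pages. Your reduction to rays is legitimate (homogeneity is a ray-wise statement, and you correctly note that once \ref{item:M1} holds, \ref{item:M2} follows by the substitution $\mathbf{u}=\gamma_1\mathbf{x}$, $\mathbf{v}=\gamma_2\mathbf{y}$), but the resulting one-dimensional claim --- that a function $q$ on $\mathbb{R}_{+}$ satisfying $\gamma_1 q(a)+\gamma_2 q(b)\le q(\gamma_1 a+\gamma_2 b)$ with $q(1)=0$ and $\liminf_{t\to 0^+}q(t)\ge 0$ must vanish identically --- is not a simplification of the cited result; it \emph{is} the cited result in the case $k=1$, with all of its difficulty intact. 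You explicitly concede that this ``delicate point'' is left undone, so the proposal has a genuine gap exactly where the mathematical content lies.

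Worse, the mechanism you sketch for closing the gap does not work as described. The inequalities you have transport bounds in the unfavourable direction: from $q(su)\ge s\,q(u)$ with $s:=\gamma_1+\gamma_2>1$ you get $q(t)\ge s^{n}q(s^{-n}t)$, and the hypothesis $\liminf_{t\to 0^+}q(t)\ge 0$ only yields $q(s^{-n}t)\ge -\varepsilon_n$ with $\varepsilon_n\to 0$; this gives $q(t)\ge -s^{n}\varepsilon_n$, which is vacuous since $s^n\to\infty$ can swamp $\varepsilon_n$. Similarly, the contraction $t\mapsto \gamma_1 t+\gamma_2$ pushes points \emph{away} from the origin, toward its fixed point $\gamma_2/(1-\gamma_1)>1$, so it cannot by itself ``transport the behaviour of $q$ to a neighbourhood of the origin.'' Obtaining the missing upper bounds $q\le 0$ (equivalently, the reverse homogeneity inequality) requires a different idea --- in \cite{MR1316490} this is done by an iteration scheme producing a multiplicatively rich set of admissible scaling factors and a careful interplay with the liminf condition --- and none of that is present in the proposal. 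If you want a self-contained proof rather than the paper's citation, that argument is what you would have to reproduce.
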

\begin{proof}
The equivalence between \eqref{equ:characterization-31dec} and conditions \ref{item:M1} and \ref{item:M2} follows from
\cite[Corollary 2]{MR1316490} (see also \cite[Theorem 2]{m444}). As for the remaining part of the statement, 
let $h\colon \mathbb{R}_{+}\to \mathbb{R}$ be the function defined by
$$
h\left(x\right):=\Psi\left(x,1\right), 
\text{ for all } 
x \in \mathbb{R}_+. 
$$ 
Then condition \ref{item:M1} implies that $\Psi(\mathbf{x})=x_2 \fixed[0.1]{\text{ }} h(x_1/x_2)$ for all $\mathbf{x}=\left(x_1, x_2\right)\in D$, so that condition \ref{item:M2} is equivalent to
\begin{equation}\label{eq:concavityh}
(x_2+y_2)h\!\left(\frac{x_1+y_1}{x_2+y_2}\right)\ge 
x_2h\!\left(\frac{x_1}{x_2}\right)+y_2h\!\left(\frac{y_1}{y_2}\right),
\quad \text{for all}
\quad 
\mathbf{x}, \mathbf{y} 
\in D\textup{.}
\end{equation} 
Now, fix $\alpha \in (0,1)$ and $x,y \in \mathbb{R}_{+}$. Setting $\mathbf{x}=(\alpha x, \alpha)$ and $\mathbf{y}=((1-\alpha) y, 1-\alpha)$ in \eqref{eq:concavityh}, we conclude that $h$ is concave.  
\end{proof}

Here and after, we continue with the same notation given in \eqref{eq:definitionD} and in the statement of Lemma \ref{lem:keylem} in \eqref{eq:alphai}. 
We are ready for the proof of Theorem \ref{th:main}. 
\begin{proof}[Proof of Theorem \ref{th:main}]
Suppose for now that $g$ is increasing. 

\medskip 

\textsc{Only If part.} By hypothesis, there exists $A \in \mathscr{L}$ such that $\alpha_1:=\lambda(A)$ and $\alpha_2:=\lambda(A^c)$ with $0<\alpha_1<1<\alpha_1+\alpha_2$.  




Thanks to Lemma \ref{lem:keylem}\ref{item:2lem}, 
inequality \eqref{eq:fundamentalinequality} holds. 
It follows by Lemma \ref{lem:matk} that $\Phi$, defined in Lemma \ref{lem:keylem}\ref{item:2lem}, is positively homogeneous, 
which can be rewritten more explicitly as
\begin{equation}\label{eq:homogeneous}
\varphi^{-1}(\beta_1 \fixed[0.2]{\text{ }}\varphi(\gamma x) + \beta_2\fixed[0.2]{\text{ }}\varphi(\gamma y))=\gamma \varphi^{-1}(\beta_1 \fixed[0.2]{\text{ }}\varphi(x) + \beta_2\fixed[0.2]{\text{ }}\varphi(y)),\text{ for all }x,y,\gamma \in \mathbb{R}_{+}.
\end{equation}

Now fix $\gamma \in \mathbb{R}_{+}$ and define the continuous function $\Lambda_\gamma: \mathbb{R}_{+} \to \mathbb{R}_{+}$ by
\begin{equation}\label{eq:defbiglambda}
\Lambda_\gamma(x):=
\varphi(\gamma \fixed[0.2]{\text{ }}\varphi^{-1}(x)), 
\quad \text{ for all }x \in \mathbb{R}_{+}. 
\end{equation} 
Then, applying $\varphi$ to both sides of \eqref{eq:homogeneous} and setting $x= \varphi^{-1}(s)$ and $y= \varphi^{-1}(t)$, we obtain
\begin{equation}\label{eq:lambda}
\beta_1 \fixed[0.2]{\text{ }}\Lambda_\gamma(s) + \beta_2\fixed[0.2]{\text{ }}\Lambda_\gamma(t)=\Lambda_\gamma(\beta_1 \fixed[0.2]{\text{ }}s + \beta_2\fixed[0.2]{\text{ }}t),
\end{equation}
for all $s,t \in \varphi(\mathbb{R}_{+})=\mathbb{R}_{+}$.




It is easy to show that $\Lambda_\gamma(x) \to 0$ as $x \to 0^{+}$. Indeed, note that $\varphi$ is increasing if and only if so is $\varphi^{-1}$. Hence, if $\varphi$ is increasing [respectively, decreasing], then $x\to 0^{+}$ if and only if $\gamma \varphi^{-1}(x)\to 0^{+}$ [resp., $\gamma \varphi^{-1}(x)\to \infty$]. Therefore $\varphi(\gamma \fixed[0.2]{\text{ }}\varphi^{-1}(x)) \to 0$ as $x \to 0^{+}$ in both cases.

Hence, letting $s\to 0$ and $t\to 0$ in \eqref{eq:lambda}, we obtain, respectively,
$$
\beta_1 \fixed[0.2]{\text{ }}\Lambda_\gamma(s)=\Lambda_\gamma(\beta_1s) \ \text{ and } \ \beta_2 \fixed[0.2]{\text{ }}\Lambda_\gamma(t)=\Lambda_\gamma(\beta_2t),\text{ for all }s,t \in \mathbb{R}_{+}.
$$
Together with \eqref{eq:lambda}, this implies that
\begin{equation}\label{eq:cacuchys}
\Lambda_\gamma(x) + \Lambda_\gamma(y)=\Lambda_\gamma(x+y),\text{ for all }x,y \in \mathbb{R}_{+}.
\end{equation}
Therefore, it is known, see e.g. \cite{MR4110366} or the textbooks \cite{MR1004465, MR2467621}, 
%
%
that there exists a function $m: \mathbb{R}_{+} \to \mathbb{R}_{+}$ such that 
\begin{equation}\label{eq:biglambda}
\Lambda_\gamma(x)=m(\gamma)\fixed[0.2]{\text{ }} x,\text{ for all }x,\gamma \in \mathbb{R}_{+}.
\end{equation}



Notice also that $m$ is continuous. Indeed, it follows by \eqref{eq:defbiglambda} and \eqref{eq:biglambda} that $\varphi(\gamma  \fixed[0.2]{\text{ }}\varphi^{-1}(x))=m(\gamma) \fixed[0.2]{\text{ }}x$ for all $x,\gamma \in \mathbb{R}_{+}$. In particular, setting $x=1$, we obtain $m(\gamma)=\varphi(\gamma  \fixed[0.2]{\text{ }}\varphi^{-1}(1))$ for all $\gamma \in \mathbb{R}_{+}$. Hence, by the continuity of  $\varphi$ we get the continuity of the function $m$. 

Lastly, note that
$$
\Lambda_{\gamma_1\gamma_2}=\Lambda_{\gamma_1} \circ \Lambda_{\gamma_2},\text{ for all }\gamma_1,\gamma_2 \in \mathbb{R}_{+},
$$
with the consequence that the function $m$ is multiplicative, i.e., 
$$
m(\gamma_1\gamma_2)=m(\gamma_1) m(\gamma_2),\text{ for all }\gamma_1,\gamma_2 \in \mathbb{R}_{+}.
$$
Together with the continuity of $m$, it is straightforward (we omit details) to show the existence of a non-zero constant $c \in \mathbb{R}$ such that
$$
m(\gamma)=\gamma^c,\quad \text{ for all }\gamma \in \mathbb{R}_{+}.
$$

Therefore, it follows by \eqref{eq:defbiglambda} and \eqref{eq:biglambda} that
$$
\varphi(\gamma \fixed[0.2]{\text{ }}\varphi^{-1}(x))=\Lambda_\gamma(x)=m(\gamma)\fixed[0.2]{\text{ }}x=\gamma^c\fixed[0.2]{\text{ }}x,\text{ for all }x,\gamma \in \mathbb{R}_{+}.
$$
Note that $c\neq 0$. Setting $x=1$, we obtain
$$
\varphi(\gamma \fixed[0.2]{\text{ }}\varphi^{-1}(1))=\gamma^c, \text{ for all }\gamma \in \mathbb{R}_+,
$$
which can be rewritten as
\begin{equation}\label{eq:final}
f^{-1}\circ T=g^{-1},
\end{equation}
where $T:\mathbb{R}_{+}\to \mathbb{R}_{+}$ is the bijection defined by $T(x):= \varphi^{-1}(1)x^{1/c}$. 
Taking the inverses of each side in \eqref{eq:final}, we conclude that $f=T \circ g$, that is, $f=ag^b$ for some $a,b \in \mathbb{R}$ with $a>0$ and $b\neq 0$. Lastly, recalling that the section $\Phi(\cdot,1)$ is concave by Lemma \ref{lem:matk}, it follows that the map $\kappa: \mathbb{R}_{+}\to \mathbb{R}_{+}$ defined by $\kappa(t):=\varphi^{-1}(\beta_1\varphi(t)+\beta_2\varphi(1))$ is concave. Since $\varphi(t)=(g\circ f^{-1})(t)=(t/a)^{1/b}$ and $\varphi^{-1}(t)=at^b$ then 
$$
\kappa(t)=a\left(\beta_1 \left(\frac{t}{a}\right)^{1/b}+\beta_2\left(\frac{1}{a}\right)^{1/b}\right)^b, \text{ for all }t \in \mathbb{R}_{+}.
$$
Simple calculations show that $\kappa^{\prime\prime}(t)=\beta_1\beta_2\, \frac{1-b}{b}\left(\beta_1+\beta_2t^{-\frac{1}{b}}\right)^{b-2}t^{-1-\frac{1}{b}}$ for all $t \in \mathbb{R}_+$. 
Hence, the concavity of $\kappa$ implies $b<0$ or $b\ge 1$. Since both $f$ and $g$ are increasing, we conclude that $b\ge 1$. 

%

 \medskip
 
\textsc{If part.} 
Suppose that $f=ag^b$ with some $a,b \in \mathbb{R}^+$ with $b\ge 1$. Note that $f^{-1}(t)=g^{-1}((t/a)^{1/b})$.  
Fix also a $\mathscr{L}\otimes \mathscr{M}$-measurable simple function $h: X\times Y\to I$. 
Recalling that $g^{-1}$ is increasing and Minkowski's integral inequality, see e.g. \cite[Appendix A]{MR0290095}, cf. also \cite[Chapter 3, Section 2.4]{MR2024343}, we obtain 
$$
f^{-1}\!\left(\int_X f\!\left(g^{-1}\!\left(\int_Y g \circ h\;d\mu\right)\right)d \lambda\right)\!
=
g^{-1}\!\left(\left(\int_X \! \left(\int_Y g \circ h\;d\mu\right)^b d \lambda\right)^{1/b}\right)\\
$$
$$
\text{ }\hspace{25mm} \le 
g^{-1}\!\left(\int_Y \! \left(\int_X (g \circ h)^b\;d\lambda\right)^{1/b} d \mu\right)
=
g^{-1}\!\left(\int_Y g\!\left(f^{-1}\!\left(\int_X f \circ h\;d\lambda\right)\right)d \mu\right)\!.
$$
Therefore inequality \eqref{eq:maininequality} holds.

\bigskip

The above part solves the case where $g$ is increasing. Suppose now that $f$ is increasing and $g$ is decreasing. The whole proof goes on the same lines, with the conclusion that $f=ag^b$ for some $a \in \mathbb{R}_+$ and some $b<0$ or $b\ge 1$. 
Hence, by the assumed monotonicity of $f$ and $g$, we obtain $b<0$. 

\medskip 

Lastly, suppose that both $f$ and $g$ are decreasing. Again, the whole proof goes on the same lines noting, by Lemma \ref{lem:keylem}, that inequality \eqref{eq:fundamentalinequality} holds with the opposite direction. Hence the same function $\kappa$ above needs to be convex. Considering that $f=ag^b$ for some $a \in \mathbb{R}_+$ and some non-zero $b \in \mathbb{R}$, we conclude by the decreasingness of both $f$ and $g$ that $b \in (0,1]$. The \textsc{Only If} part of this latter case goes analogously.
\end{proof}

\begin{proof}
[Proof of Theorem \ref{th:main2}]
The proof of Theorem \ref{th:main2} goes on the same lines of the previous one. For, suppose that $g$ is increasing and repeat the same reasoning replacing $\Phi$ with $\tilde{\Phi}$, with the same conclusion that $f=ag^b$ for some $a,b \in \mathbb{R}$ with $a>0$ and $b\neq 0$. At this point, the concavity of the section 
$$
\tilde{\Phi}(t,1)=-\varphi(\alpha_1\varphi^{-1}(t)+\alpha_2\varphi^{-1}(1))=-(\alpha_1t^b+\alpha_2)^{1/b}
$$
is equivalent to $b\ge 1$. The case where $g$ is decreasing and the proofs of the \textsc{If} parts are analogous (we omit further details). 
\end{proof}

\begin{proof}
[Proof of Theorem \ref{thm:probabilitycharacterization}]
\ref{item:prob1} $\implies$ \ref{item:prob2}. By Lemma \ref{lem:keylem}\ref{item:1lem}, $f$ is increasing. Moreover, by Lemma \ref{lem:keylem}\ref{item:3lem}, the function $\Psi_{\alpha_1}$, which coincides with $-\tilde{\Phi}$, is $\beta_1$-convex by inequality \eqref{eq:fundamentalinequality3}. Since $\Psi_{\alpha_1}$ is also continuous, then it has to be convex, see e.g. \cite[Theorem 1]{MR3873086} or the textbooks \cite{MR1004465, MR2467621}.  

\medskip

\ref{item:prob2} $\implies$ \ref{item:prob3}. Let $\Gamma$ be the set of all $\gamma \in (0,1)$ for which $\Psi_\gamma$ is convex. By item \ref{item:prob2}, $\Gamma$ is non-empty, indeed $\alpha_1 \in \Gamma$. 

We claim that $\Gamma$ is relatively dense in $(0,1)$. For, 
suppose that $\gamma_1,\gamma_2 \in \Gamma$ and note that 
$$
\Psi_{\gamma_1}\left(\Psi_{\gamma_2}(\mathbf{x}),x_2\right)=\Psi_{\gamma_1\gamma_2}(\mathbf{x}), 
\text{ for all }\mathbf{x} \in D.
$$
Since the composition of increasing convex functions is convex, it follows that $\gamma_1\gamma_2 \in \Gamma$. In addition, it is clear that if $\gamma \in \Gamma$ then $1-\gamma \in \Gamma$. If $\Gamma$ were not relatively dense in $(0,1)$, there would exist a non-empty open interval $U:=(a,b)$ with maximal lenght such that $U\cap \Gamma=\emptyset$. Since $\Gamma \neq \emptyset$ then $a\neq 0$ or $b\neq 1$. Suppose that $b\neq 1$ (the other case is analogous). Let $n\ge 1$ be the smallest integer such that $\alpha_1^n<1-\nicefrac{a}{b}$. Note that, by the minimality of $n$, we have $1-\alpha_1^n \le 1-\alpha_1(1-\nicefrac{a}{b})$. Fix also a sufficiently small $\varepsilon >0$ such that $b+\varepsilon<1$ and $(b+\varepsilon)(1-\alpha_1(1-\nicefrac{a}{b}))<b$ (note that this is indeed possible). It follows by construction that  
\begin{equation}\label{eq:denseGamma}
a< b(1-\alpha_1^{n})\le b\left(1-\alpha_1(1-\nicefrac{a}{b})\right)< (b+\varepsilon)(1-\alpha_1^{n}) <b<b+\varepsilon<1.
\end{equation}
By the maximality of $U$ we can take $\gamma \in (b,b+\varepsilon)$. However, this implies that $\gamma(1-\alpha_1^n) \in \Gamma$, which contradicts \eqref{eq:denseGamma} and the fact that $U\cap \Gamma=\emptyset$.

At this point, more explicitly, we know that
$$
\Psi_\gamma\left(\alpha \mathbf{x}+(1-\alpha)\mathbf{y}\right)\le 
\alpha \Psi_\gamma\left(\mathbf{x}\right)+
(1-\alpha) \Psi_\gamma\left(\mathbf{y}\right)
$$
for all $\alpha \in (0,1)$, $\mathbf{x}, \mathbf{y} \in D$, and $\gamma \in \Gamma$. By the continuity of $f\circ g^{-1}$, we conclude that $\Gamma=(0,1)$. 

\medskip

\ref{item:prob3} $\implies$ \ref{item:prob1}.
This is straightforward.
\end{proof}


\section{Closing remark}\label{closingremarks}

In light of the proof of Theorem \ref{thm:probabilitycharacterization}, it is worth to remark that, if $(X, \mathscr{L}, \lambda)$ and $(Y, \mathscr{M}, \mu)$ are non-degenerate measure spaces with $\lambda(X)\le 1$ and $\mu(Y)\le 1$, then inequality \eqref{eq:maininequality} is satisfied by every $\mathscr{L}\otimes \mathscr{M}$-measurable simple func\-tion $h \colon X\times Y \to I$, provided that condition \ref{item:prob2} or condition \ref{item:prob3} holds. 


\bibliographystyle{amsplain}

\providecommand{\href}[2]{#2}

\end{document}